\documentclass[12pt]{article}

\usepackage{amsmath,amsthm,amsfonts,amssymb, color,xcolor,subcaption,graphicx} 

\newtheorem{theorem}{Theorem}[section]

\newtheorem{lemma}[theorem]{Lemma}
\newtheorem{definition}[theorem]{Definition}
\newtheorem{remark}[theorem]{Remark}

\def\cB{\mathcal{B}}

\def\cE{\mathcal{E}}
\def\cF{\mathcal{F}}

\def\cN{\mathcal{N}}

\def\cS{\mathcal{S}}

\def\bE{\mathbb{E}}

\def\bN{\mathbb{N}}
\def\bP{\mathbb{P}}
\def\bR{\mathbb{R}}

\def\k{\kappa}

\newcommand{\fm}{\mathfrak{m}}

\newcommand{\lipf}{h}
\newcommand{\kerf}{f}

\begin{document}

\title{Almost sure CLT for hyperbolic Anderson model \\
with L\'evy colored noise}

\author{Raluca M. Balan\footnote{Corresponding author. University of Ottawa, Department of Mathematics and Statistics, 150 Louis Pasteur Private, Ottawa, Ontario, K1N 6N5, Canada. E-mail address: rbalan@uottawa.ca.} \footnote{Research supported by a grant from the Natural Sciences and Engineering Research Council of Canada.}
\and
Hanniel E. Kouam\'e\footnote{University of Ottawa, Department of Mathematics and Statistics, 150 Louis Pasteur Private, Ottawa, Ontario, K1N 6N5, Canada. E-mail address: mkoua060@uottawa.ca.}
\and
William D. Stephenson\footnote{University of Ottawa, Department of Mathematics and Statistics, 150 Louis Pasteur Private, Ottawa, Ontario, K1N 6N5, Canada. E-mail address: wstep051@uottawa.ca.}
}

\date{February 27, 2026}
\maketitle

\begin{abstract}
\noindent In this note, we prove the Almost Sure Central Limit Theorem (ASCLT) for the spatial integral of the solution of the hyperbolic Anderson model driven by the L\'evy colored noise introduced in \cite{B15}. For this, we
use the central limit theorem for the normalized spatial integral, and an estimate for the Malliavin derivative of the solution, both derived in the recent preprint \cite{BS26}. We assume that the spatial correlation kernel of the noise is either integrable, or it is given by the Riesz kernel.
\end{abstract}

\noindent {\em MSC 2020:} Primary 60H15; Secondary 60G60, 60G51

\vspace{1mm}

\noindent {\em Keywords:} stochastic partial differential equations, random fields, Malliavin calculus, almost sure central limit theorem, Poisson random measure, L\'evy white noise

\pagebreak

\section{Introduction}

In this article, we consider the hyperbolic Anderson model:
\begin{align}
\label{HAM}
	\begin{cases}
		\dfrac{\partial^2 u}{\partial^2 t} (t,x)
		=  \dfrac{\partial^2 u}{\partial x^2} (t,x)+u(t,x) \dot{X}(t,x), \
		t>0, \ x \in \bR, \\
		u(0,x) = 1, \dfrac{\partial u}{\partial t} (0,x)=0, \quad x \in \bR,
	\end{cases}
\end{align}
where $X$ is the {\em L\'evy colored noise} introduced in \cite{B15}. 
The recent article \cite{BJ25} shows that equation \eqref{HAM} has a unique solution, and gives some properties of the moments of the solution. This analysis was continued in article \cite{BS26} which studies the asymptotic Gaussian fluctuations as $R \to \infty$ of the {\em spatial average} of the (centered) solution:
\begin{equation}
\label{def-F}
F_R(t)=\int_{-R}^R \big( u(t,x)-1\big)dx.
\end{equation}
 
The goal of this short note is to complement this analysis and show that the normalized process $\{\widetilde{F}_R(t)\}_{R>0}$ satisfies the {\em almost sure central limit theorem (ASCLT)}, where
\begin{equation}
\label{def-Ft}
\widetilde{F}_R(t)=\frac{F_R(t)}{\sigma_R(t)} \quad \mbox{and} \quad
\sigma_{R}^2(t)={\rm Var}\big(F_R(t)\big).
\end{equation}

We recall that the noise $X$ is defined by
\[
X_t(\varphi)=L_t(\varphi *\k), \quad \mbox{for all} \ t>0,\varphi \in \cS(\bR),
\]
where $\cS(\bR)$ is the set of rapidly decreasing functions on $\bR$, $\k:\bR \to [0,\infty]$ is a suitable kernel (which satisfies Assumptions A1 and A2 of \cite{BJ25}), and $L$ is the {\em L\'evy white noise} given by
\[
L(B)=\int_{B \times \bR_0} z \widehat{N}(dt,dx,dz) \quad \mbox{for all} \quad B \in \cB_b(\bR_+ \times \bR).
\]
with $\cB_b(\bR_+ \times \bR)$ being the class of bounded Borel subsets of $\bR_{+}\times \bR$.

Here $N$ is a Poisson random measure on $Z=\bR_{+}\times \bR \times \bR_0$ with intensity 
\[
\fm(dt,dx,dz)=dt dx \nu(dz),
\] 
defined on a complete probability space $(\Omega,\cF,\bP)$, and  $\widehat{N}(A)=N(A)-\fm(A)$ is the compensated version of $N$.
The space $\bR_0=\bR \verb2\2 \{0\}$ is equipped with the distance $d(x,y)=|x^{-1}-y^{-1}|$, so that the bounded subsets of $\bR_0$ are those that are bounded away from 0.
We assume that the measure $\nu$ satisfies the following condition:
\[
m_2:=\int_{\bR_0}|z|^2 \nu(dz)<\infty.
\]
which guarantees that the noise $L$ has finite variance.

\medskip

A predictable process $\{u(t,x);t\geq 0,x\in \bR\}$ is called a {\em (mild) solution} of \eqref{HAM}
if it satisfies the integral equation:
\[
u(t,x)=1+\int_0^t \int_{\bR}G_{t-s}(x-y)u(s,y)X(ds,dy),
\]
where the stochastic integral is interpreted in the It\^o sense.

A random field $\{\Phi(t,x);t\geq 0,x\in \bR\}$ is {\em predictable} if it is measurable with respect to the predictable $\sigma$-field on $\Omega \times \bR_{+} \times \bR$, which is the minimal $\sigma$-field with respect to which all elementary processes are measurable. An {\em elementary process} is a linear combination of processes of the form
\[
\Phi(t,x)=Y 1_{(a,b]}(t) 1_{A}(x)
\]
where  $0\leq a<b$, $A \in \cB_b(\bR)$ and $Y$ is $\cF_a$-measurable. 
 Here $(\cF_t)_{t\geq 0}$ is the filtration induced by $N$, i.e.
\[
\cF_t=\sigma(\{N([0,s] \times A \times B); s \in [0,t],A\in \cB_b(\bR), B \in \cB_b(\bR_0)\}) \vee \cN,
\]
where $\cN$ is the class of $\bP$-null sets.

\medskip

We recall the classical ASCLT. Let $\{X_k\}_{k\geq 1}$ be a sequence of i.i.d. random variables defined on a probability space $(\Omega,\cF,\bP)$, with $\bE[X_1]=0$ and $\bE[X_1^2]=1$. Define $S_N=\sum_{k=1}^{N}X_k$. Then, for $\bP$-almost all $\omega \in \Omega$,
\[
\frac{1}{\log N}\sum_{k=1}^{N}\frac{1}{k}\delta_{S_k(\omega)/\sqrt{k}} \Longrightarrow \gamma \quad \mbox{as}  \quad N \to \infty,
\]
where $\gamma$ is the standard normal distribution and $\Longrightarrow$ denotes convergence in distribution. This result was stated by Paul L\'evy on page 270 of \cite{levy54} without a proof. The proof and various extensions were given few decades later by several authors: see \cite{brosamler88,lacey-philipp90,schatte88}. 
In the present paper, we will prove a continuous-time version of the ASCLT, which is well-suited for the study of SPDEs, and is motivated by L\'evy's result. We refer the reader to Section 1.2 of \cite{BXZ} for further comments on ASCLT in this continuous-time framework.

\medskip

We recall the following definition from \cite{BXZ}. 

\begin{definition}
{\rm
a) A sequence $\{ F_k\}_{k \in \bN}$ of random variables defined on a probability space $(\Omega,\cF,\bP)$ {\em satisfies
the ASCLT} if for $\bP$-almost every $\omega \in \Omega$,
\[
\mu_N^{\omega}:= \frac{1}{\log N}\sum_{k=1}^{N}\frac{1}{k}\delta_{F_k(\omega)} \Longrightarrow \gamma \quad \mbox{as}  \quad N \to \infty.
\]

b) A family  $\{ F_{\theta}\}_{\theta >0}$ of random variables {\em satisfies
the  ASCLT} if for $\bP$-almost every
 $\omega\in\Omega$, the map $\theta \mapsto F_{\theta}(\omega)$ is  measurable, and
\[
\nu_T^{\omega}:= 
\frac{1}{\log T}\int_1^T  \frac{1}{\theta}\delta_{F_{\theta}(\omega)}   dy
\Longrightarrow \gamma \quad \mbox{as}  \quad T \to \infty.
\]
}
\end{definition}

Recall that the {\em Riesz kernel of order $\alpha \in (0,1)$} is given by:
\[
R_{1,\alpha}(x)=C_{1,\alpha}|x|^{-(1-\alpha)} \quad \mbox{where} \quad C_{1,\alpha}=\pi^{-1/2}2^{-\alpha}\frac{\Gamma(\frac{1-\alpha}{2})}{\Gamma(\frac{\alpha}{2})}.
\]

We are now ready to state the main result of this paper.

\begin{theorem}
\label{main}
Assume that $\kappa \in L^1(\bR)$ or $\kappa=R_{1,\alpha/2}$ for some $\alpha \in (0,1)$. Then the process
$\{\widetilde{F}_{\theta}(t)\}_{\theta >0}$ defined by \eqref{def-Ft} satisfies the ASCLT, for any $t>0$.
\end{theorem}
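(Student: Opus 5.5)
We will follow the strategy of \cite{BXZ}, which reduces the continuous-time ASCLT to two ingredients. The first is the CLT $\widetilde{F}_R(t)\Rightarrow\gamma$. The second is a covariance decay estimate for suitable functionals of the normalized family. The criterion we aim to apply, in the spirit of Ibragimov--Lifshits as adapted in \cite{BXZ}, reads as follows. Suppose $\{G_\theta\}_{\theta>0}$ satisfies $G_\theta\Rightarrow\gamma$, $\sup_\theta\bE[G_\theta^2]<\infty$, and, for every $\xi\in\bR$, the characteristic-function deviations $\Delta_\theta(\xi)=e^{i\xi G_\theta}-\bE[e^{i\xi G_\theta}]$ satisfy
\[
\sum_{T\ge 2}\frac{1}{T\log^3 T}\int_1^T\!\!\int_1^T \frac{\big|\bE[\Delta_{\theta_1}(\xi)\overline{\Delta_{\theta_2}(\xi)}]\big|}{\theta_1\theta_2}\,d\theta_1d\theta_2<\infty .
\]
Then $\{G_\theta\}$ satisfies the ASCLT. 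Measurability of $\theta\mapsto \widetilde F_\theta(t)$ is immediate, since $u(t,\cdot)$ is a measurable random field and $\sigma_\theta$ is continuous. The CLT $\widetilde F_R(t)\Rightarrow\gamma$ is taken from \cite{BS26}, under either assumption on $\kappa$. The same source gives the variance order: $\sigma_R^2(t)\asymp R$ when $\kappa\in L^1$, and $\sigma_R^2(t)\asymp R^{1+\alpha}$ in the Riesz case $\kappa=R_{1,\alpha/2}$.

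The core step is to bound $|{\rm Cov}(e^{i\xi\widetilde F_{\theta_1}},e^{i\xi\widetilde F_{\theta_2}})|$ for $1\le\theta_1\le\theta_2$. We will do this with the Poincar\'e inequality on Poisson space. Because the noise is Poisson, we avoid the Gaussian chain rule and use the add-one-cost operator $D_{s,y,z}$ together with the bound $|D e^{i\xi G}|\le|\xi|\,|DG|$. This yields
\[
\big|{\rm Cov}(e^{i\xi G_1},e^{i\xi G_2})\big|\le \xi^2\int_Z \|D_{s,y,z}G_1\|_2\,\|D_{s,y,z}G_2\|_2\,\fm(ds,dy,dz),
\]
where the covariance form of the Poincar\'e inequality follows from the Clark--Ocone (or chaos) representation and Cauchy--Schwarz. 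Next we substitute the Malliavin derivative estimate from \cite{BS26}. We expect it to have the form $\|D_{s,y,z}u(t,x)\|_2\le C|z|\,(G_{t-s}*\kappa)(x-y)$, possibly with $|z|$ replaced by a $|z|\vee|z|^p$ factor that is integrable against $\nu$ thanks to $m_2<\infty$ (or a higher moment assumed there). Integrating in $x$ gives $\|D_{s,y,z}F_R\|_2\le C|z|\int_{-R}^R(G_{t-s}*\kappa)(x-y)dx$. After integrating out $z$ and $s\in[0,t]$, we are reduced to a purely deterministic quantity:
\[
\frac{1}{\sigma_{\theta_1}\sigma_{\theta_2}}\int_\bR \Big(\int_{-\theta_1}^{\theta_1}\phi(x-y)dx\Big)\Big(\int_{-\theta_2}^{\theta_2}\phi(x'-y)dx'\Big)dy,
\]
where $\phi=G_{t-\cdot}*\kappa$ integrated in time, so $\phi$ is a bounded function supported near the light cone and convolved with $\kappa$.

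Estimating this quantity is the step we expect to be the main obstacle, and it splits into the two cases. If $\kappa\in L^1$, then $\phi\in L^1\cap L^\infty$. The integral is then bounded by $C\min(\theta_1,\theta_2)=C\theta_1$, so the covariance is $\lesssim \theta_1/\sqrt{\theta_1\theta_2}=(\theta_1/\theta_2)^{1/2}$. In the Riesz case, $\phi(x)\lesssim 1\wedge|x|^{-(1-\alpha)}$. Here the integral is $\lesssim \theta_1\theta_2^{\alpha}$, which we obtain by bounding the inner $x'$-integral by $\theta_2^{\alpha}$ and the $(y,x)$-integral by $\theta_1\|\phi\|_{L^1_{loc}}$-type estimates. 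Together with $\sigma_\theta^2\asymp\theta^{1+\alpha}$, this again gives a bound $\lesssim(\theta_1/\theta_2)^{(1-\alpha)/2}$, with some care needed near the diagonal. In both cases we obtain $|\bE[\Delta_{\theta_1}\overline{\Delta_{\theta_2}}]|\le C\xi^2(\theta_1/\theta_2)^{\beta}$ for some $\beta>0$.

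The conclusion is then an elementary computation. Substituting $\theta_i=e^{v_i}$, we get
\[
\int_1^T\!\!\int_1^T(\theta_1\wedge\theta_2/\theta_1\vee\theta_2)^\beta\frac{d\theta_1d\theta_2}{\theta_1\theta_2}=\int_0^{\log T}\!\!\int_0^{\log T}e^{-\beta|v_1-v_2|}dv_1dv_2\le C\log T,
\]
so the series above is bounded by $\sum_T 1/(T\log^2T)<\infty$. The criterion then applies, which gives the ASCLT for $\{\widetilde F_\theta(t)\}_{\theta>0}$ for every $t>0$.
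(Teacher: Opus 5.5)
Your overall architecture is a valid alternative to the paper's. You feed the Clark--Ocone covariance bound with $h(x)=e^{i\xi x}$ (Lipschitz constant $|\xi|$, uniform for $|\xi|\le r$) into an Ibragimov--Lifshits type criterion. The paper instead tests against bounded Lipschitz $h$ and uses $\bE[L_T^2]\lesssim 1/\log T$ (Lemma A). Your $\kappa\in L^1$ case matches the paper. (Your time-integrated $\phi$ should be read as: bound $G_{t-s}\le G_t$, take $\phi=G_t*\kappa$, and pay a factor $t$.)

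The genuine gap is in the Riesz case, which you yourself flag as the crux. You keep the $y$-integral outside and bound $b(y)=\int_{-\theta_2}^{\theta_2}\phi(x'-y)\,dx'\lesssim\theta_2^{\alpha}$ uniformly in $y$. What remains is
\[
\int_{\bR}\int_{-\theta_1}^{\theta_1}\phi(x-y)\,dx\,dy=2\theta_1\int_{\bR}\phi .
\]
But $\phi=G_t*\kappa$ with $\kappa=R_{1,\alpha/2}$ decays like $|x|^{-(1-\alpha/2)}$, so $\phi\notin L^1(\bR)$ and this is $+\infty$. No ``$L^1_{loc}$-type'' estimate helps, since the divergence comes from $|y|\to\infty$. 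Also, the decay $1\wedge|x|^{-(1-\alpha)}$ you assign to $\phi$ is that of $f=\kappa*\tilde\kappa=R_{1,\alpha}$, not of $\phi$. With that exponent even a careful splitting of the $y$-integral fails. The tail $\int_{|y|>2\theta_2}\theta_1\theta_2|y|^{-2(1-\alpha)}dy$ diverges for $\alpha\ge 1/2$, and for $\alpha<1/2$ it gives the wrong order $\theta_1\theta_2^{2\alpha}$. So the bound $\lesssim\theta_1\theta_2^{\alpha}$, although true, is not established by your argument.

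The missing step is to integrate out $y$ first, as the paper does. Use $\int_{\bR}\kappa(y-y')\kappa(y-y'')\,dy=f(y'-y'')$. The quantity becomes $A_{\theta_1,\theta_2}=\int\!\!\int I_{\theta_1}(y)I_{\theta_2}(z)f(y-z)\,dy\,dz$, with $I_\theta=1_{[-\theta,\theta]}*G_t$.

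Now your inner/outer idea works. Since $0\le I_{\theta_2}\le t$ is supported in $[-\theta_2-t,\theta_2+t]$ and $f$ is symmetric decreasing,
\[
\sup_y\int I_{\theta_2}(z)f(y-z)\,dz\le t\int_{-\theta_2-t}^{\theta_2+t}f\lesssim_t\theta_2^{\alpha},
\]
while $\int I_{\theta_1}=2t\theta_1$. Hence $A_{\theta_1,\theta_2}\lesssim\theta_1\theta_2^{\alpha}$, and with $\sigma_\theta^2\asymp\theta^{1+\alpha}$ the ratio is $\lesssim(\theta_1/\theta_2)^{(1-\alpha)/2}$. This is even shorter than the paper's Fourier computation (fBm identity plus the Erd\'elyi formula), which gives the same order $\theta w^{2H-1}+\theta^{2H}$ with $2H=1+\alpha$.

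Alternatively, keep $y$ outside but use the correct tail $\phi(x)\lesssim 1\wedge|x|^{-(1-\alpha/2)}$. Split at $|y|=2\theta_1$ and $|y|=2\theta_2$, using $b(y)\lesssim\theta_2|y|^{-(1-\alpha/2)}$ for $|y|>2\theta_2$. The square-integrability of that tail ($2-\alpha>1$) is what makes this close. With either repair, the rest of your proof goes through.
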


Article \cite{BXZ} presents two methods for proving the ASCLT for
 the spatial integral of the solution of (HAM) driven by the L\'evy white noise $L$: \\
 (i) a direct proof based on Clark-Ocone formula; \\
 (ii) a longer proof based on the Ibragimov-Lifshitz criterion and the recent second-order Poincar\'e inequality due to \cite{trauthwein25}. 
 The second method is also used in \cite{xia-zheng25} for proving the ASCLT for the spatial integral of the solution of (HAM) or (PAM), driven by a Gaussian noise, which is colored in space and time. 
 
 \medskip

The goal of the present note is to present an adaptation of method (i) which works for the L\'evy colored noise introduced in \cite{BJ25}. At the end, in Remark \ref{Gaussian}, we show that this method can also be used for the spatially-colored Gaussian noise considered in \cite{dalang99}, providing an alternative (shorter) proof for the result of \cite{xia-zheng25}, when the noise is white in time.
 
 \section{Proof of Theorem \ref{main}}

We will use the following results, which were proved in \cite{BS26}: 
\begin{description}
\item[(i)] $\sigma_R^2(t) \sim K(t) R^{\beta}$ as $R \to \infty$, where $K(t)>0$ is a constant that depends on $t$, and
    \begin{equation}
\label{def-beta}
\beta:=\left\{
\begin{array}{ll} 1 & \mbox{if $\k \in L^1(\bR)$} \\
\alpha+1 & \mbox{if $\k=R_{1,\alpha/2}$ for some $\alpha \in (0,1)$ }
\end{array} \right.
\end{equation}

\item[(ii)] $\widetilde{F}_R(t) \stackrel{d}{\to} N(0,1)$ as $R\to \infty$

\item[(iii)] for any $0\leq r\leq t$, $x \in \bR$, $y \in \bR$ and $z \in \bR_0$,
\[
\|D_{r,y,z}u(t,x)\|_2 \leq C_t |z|\int_{\bR}G_{t-r}(x-y')\k(y-y')dy',
\]
where $C_t>0$ is a constant that depends on $t$.

\end{description}

We fix $t>0$. We denote $\widetilde{F}_{\theta}=\widetilde{F}_{\theta}(t)$ and $\sigma_{\theta}=\sigma_{\theta}(t)$.
We have to prove that for any bounded Lipschitz function $\lipf$ on $\bR$, 
\[
\frac{1}{\log T}\int_1^T\frac{1}{\theta} \lipf(\widetilde{F}_{\theta})d\theta \stackrel{a.s.}{\longrightarrow} \int_{\bR}\lipf(x)\gamma(dx) \quad \mbox{as} \quad T \to \infty.
\] 
In view of item (ii) above, it is enough to show that
\begin{equation}
\label{claim}
\frac{1}{\log T}\int_1^T\frac{1}{\theta} \Big( \lipf(\widetilde{F}_{\theta})-\bE[\lipf(\widetilde{F}_{\theta})] \Big) d\theta \stackrel{a.s.}{\longrightarrow} 0 \quad \mbox{as} \quad T \to \infty.
\end{equation}

Let $H_{\theta}=\lipf(\widetilde{F}_{\theta})-\bE[\lipf(\widetilde{F}_{\theta})]$. We use the Clark-Ocone formula with respect to $\widehat{N}$,
\[
H_{\theta}=\int_0^t \int_{\bR}\int_{\bR_0}\bE[D_{r,y,z}H_{\theta}|\cF_r]\widehat{N}(dr,dy,dz).
\]

Similarly to relation (3.2) of \cite{BXZ}, we have: 
\begin{equation}
\label{H-tw}
|\bE[H_{\theta} H_{w}]| \leq \frac{{\rm Lip}^2(\lipf)}{\sigma_{\theta} \sigma_{w}} \int_0^t \int_{\bR}\int_{\bR_0} \|D_{r,y,z}F_{\theta}\|_2 \|D_{r,y,z}F_{w}\|_2 \nu(dz)dydr,
\end{equation}
where ${\rm Lip}(\lipf)$ is the Lipschitz constant of $\lipf$, and $\|\cdot\|_2$ is the norm in $L^2(\Omega)$.

We denote
\[
\varphi_{t,\theta}(r,y)=\int_{-\theta}^{\theta}G_{t-r}(x-y)dx \quad \mbox{and} \quad I_{\theta}(y)=\varphi_{t,\theta}(0,y).
\]

By Minkowski's inequality, item (iii) above, and Fubini's theorem
\begin{align*}
\|D_{r,y,z}F_{\theta}\|_2 & \leq \int_{-\theta}^{\theta}\|D_{r,y,z}u(t,x)\|_2 dx \leq C_t |z|
\int_{-\theta}^{\theta} \int_{\bR} G_{t-r}(x-y')\k(y-y')dy' dx\\
&= C_t |z| \int_{\bR}\varphi_{t,\theta}(r,y')\k(y-y')dy'.
\end{align*}

Inserting this estimate into \eqref{H-tw}, we obtain:
\begin{align*}
& |\bE[H_{\theta}H_w]| \leq \\
& \quad \frac{{\rm Lip}^2(\lipf)}{\sigma_{\theta} \sigma_{w}} C_t^2 m_2 \int_0^t \int_{\bR} \left( \int_{\bR}\varphi_{t,\theta}(r,y')\k(y-y')dy' \right) 
\left( \int_{\bR}\varphi_{t,\theta}(r,y'')\k(y-y'')dy'' \right) dydr.
\end{align*}
Note that $\varphi_{t,\theta}(r,y)\leq I_{\theta}(y)$ for any $r\in [0,t]$ and $y \in \bR$, since $G_{t-r}(\cdot)\leq G_t(\cdot)$. Hence,

\begin{align*}
|\bE[H_{\theta}H_w]| & \leq \frac{{\rm Lip}^2(\lipf)}{\sigma_{\theta} \sigma_{w}} C_t^2 m_2 t\int_{\bR^{3}} I_{\theta}(y') I_{w}(y'') \k(y-y') \k(y-y'') dy'dy'' dy\\
&=\frac{{\rm Lip}^2(\lipf)}{\sigma_{\theta} \sigma_{w}} C_t^2 m_2 t\int_{\bR^2} I_{\theta}(y') I_w(y'')\kerf(y'-y'')dy'dy''.
\end{align*}

using the fact that $\kerf=\kappa * \widetilde{\kappa}$. Using the notation
\[
A_{\theta,w}:=\int_{\bR}\int_{\bR} I_{\theta}(y) I_w(z)\kerf(y-z)dydz,
\]
we have:
\begin{equation}
\label{H-A}
\big|\bE[H_{\theta}H_w]\big| \leq {\rm Lip}^2(\lipf) C_t^2 m_2 t \frac{A_{\theta,w}}{\sigma_{\theta} \sigma_{w}}.
\end{equation}
To estimate $A_{\theta,w}/(\sigma_{\theta}\sigma_w)$, we consider separately the two cases. We assume that $\theta<w$.

\medskip

{\em Case 1.} Assume that $\kappa \in L^1(\bR)$. By Young's inequality, $\kerf \in L^1(\bR)$. We use the fact that $I_{w}(z)\leq \int_{\bR}G_t(x-z)dx=t$. It follows that

\begin{align*}
A_{\theta,w} & \leq  t\|\kerf\|_{L^1(\bR)}
\int_{\bR}I_{\theta}(y)  dy= t \|\kerf\|_{L^1(\bR)}
\int_{-\theta}^{\theta} \int_{\bR} G_t(x-y) dy dx
=2\theta t \|\kerf\|_{L^1(\bR)}.
\end{align*}

Since in this case $\sigma_{\theta}^2 \sim K(t) \theta$, we infer that
\[
\frac{A_{\theta,w}}{\sigma_{\theta}\sigma_{w}}  \leq  C_{t,\kerf} \left(\frac{\theta}{w}\right)^{1/2},
\]
where $C_{t,\kerf}>0$ is a constant that depends on $(t,\kerf)$. Relation \eqref{claim} now follows by \eqref{H-A} and Lemma 3.1 of \cite{BXZ}.

\medskip

{\em Case 2.} Assume that $\k=R_{1,\alpha/2}$ for some $\alpha \in (0,1)$. Then $\kerf=\kappa * \tilde{\kappa}=R_{1,\alpha}$ and
\[
A_{\theta,w}=C_{1,\alpha}\int_{\bR} \int_{\bR}I_{\theta}(y) I_{w}(z)|y-z|^{-(1-\alpha)}dydz.
\]

To calculate $A_{\theta,w}$, we use a method based on Fourier transforms. 
Note that the Fourier transform of $\kerf$ in $\cS'(\bR)$ is $g(\xi)=|\xi|^{-\alpha}$, i.e. $(\kerf,\cF \phi)=(g,\phi)$ for any $\phi \in \cS(\bR)$. By the Fourier inversion theorem,
$(\kerf,\varphi)=\frac{1}{2\pi}(g,\cF \varphi)$ for any $\varphi \in \cS(\bR)$, that is
\[
\int_{\bR}|x|^{-(1-\alpha)} \varphi(x)dx=\frac{1}{2\pi C_{1,\alpha} }\int_{\bR}\cF \varphi(\xi)|\xi|^{-\alpha}d\xi.
\]
Hence, for any $\varphi,\psi \in \cS(\bR)$,
\begin{equation}
\label{id1}
\int_{\bR}\int_{\bR}\varphi(x)\psi(y)|x-y|^{-(1-\alpha)}dxdy=\frac{1}{2\pi C_{1,\alpha} } \int_{\bR}\cF \varphi (\xi)\overline{\cF \psi(\xi)}|\xi|^{-\alpha}d\xi.
\end{equation}
Here $\cF \varphi(\xi)=\int_{\bR}e^{-i \xi x}\varphi(x)dx$ is the Fourier transform of $\varphi$.

For the sake of a comparison with some identities related the fractional Brownian motion (fBm), we use the parametrization $\alpha=2H-1$ with $H \in (\frac{1}{2},1)$. 
Then 
\begin{align*}
\frac{1}{2\pi C_{1,\alpha}}&= 
\pi^{-1/2} 2^{2H-2}\frac{\Gamma(H-\frac{1}{2})}{\Gamma(1-H)}=\pi^{-1/2} 2^{2H-2}\frac{\Gamma(H-\frac{1}{2})\Gamma(H)\sin(\pi H)}{\pi}\\
&= \frac{\sin(\pi H)}{\pi}\Gamma(2H-1)=\frac{\sin(\pi H)}{\pi}\cdot \frac{\Gamma(2H+1)}{2H(2H-1)}=\frac{c_H}{\alpha_H},
\end{align*}
using the Gamma identities
$\Gamma(1-z)\Gamma(z)=\frac{\pi}{\sin(\pi z)}$, $\Gamma(z) \Gamma(z+\frac{1}{2})=2^{1-2z}\sqrt{\pi}\Gamma(2z)$ and
$\Gamma(z+1)=z\Gamma(z)$ (in this order), and the notation:
\[
\alpha_{H}:=H(2H-1) \quad \mbox{and} \quad c_H:=\frac{\Gamma(2H+1)\sin(\pi H)}{2\pi}.
\]

In this parametrization, relation \eqref{id1} becomes: 
\begin{equation}
\label{id}
\alpha_H \int_{\bR} \int_{\bR}\varphi(x)\psi(y)|x-y|^{2H-2}dxdy=c_H \int_{\bR}\cF \varphi(\xi)\overline{\cF \psi(\xi)}|\xi|^{1-2H}d\xi,
\end{equation}
for any $\varphi,\psi \in \cS(\bR)$, and
\[
A_{\theta,w}=\frac{\alpha_H}{2\pi c_H}\int_{\bR}\int_{\bR}I_{\theta}(y) I_{w}(z)|y-z|^{2H-2}dydz.
\]

Identity \eqref{id} holds also for non-negative functions $\varphi,\psi \in L^1(\bR)$ for which $\cE(\varphi)<\infty$ and $\cE(\psi)<\infty$, where
\[
\cE(\varphi):=\alpha_H\int_{\bR}\int_{\bR} \varphi(x) \varphi(y)|x-y|^{2H-2}dxdy=c_H \int_{\bR}|\cF \varphi (\xi)|^2 |\xi|^{1-2H}d\xi.
\]
(Note that if $\varphi=1_{[0,t]}$ and $\psi=1_{[0,s]}$ for $t,s\in \bR$, then \eqref{id} coincides with the covariance $\bE[B_t B_s]$ of a fractional Brownian motion $(B_t)_{t\in \bR}$ of index $H$, where $1_{[0,t]}:=1_{[t,0]}$ if $t<0$.)

\medskip

In our case, we apply \eqref{id} to $I_{\theta},I_{w} \in L^1(\bR)$:
\[
A_{\theta,w}=\frac{\alpha_H}{2\pi c_H}\int_{\bR}\int_{\bR} I_{\theta}(y)I_{w}(z)|y-z|^{2H-2} dydz=\frac{1}{2\pi}\int_{\bR}\cF I_{\theta}(\xi)\overline{\cF I_{w}(\xi)}|\xi|^{1-2H}d\xi.
\]

We calculate the Fourier transform of $I_{\theta}$. By Fubini's theorem, for any $\xi \in \bR$,
\begin{align*}
\cF I_{\theta}(\xi)&=\int_{\bR}e^{-i\xi y}I_{\theta}(y)dy=\int_{\bR}e^{-i\xi y}\left(\int_{-\theta}^{\theta}G_t(x-y)dx\right)dy=\int_{-\theta}^{\theta} \cF G_t(x-\cdot)(\xi)dx\\
&=\overline{\cF G_t(\xi)} \int_{-\theta}^{\theta} e^{-i\xi x} dx=2\overline{\cF G_t(\xi)} \, \frac{\sin(|\xi|\theta)}{|\xi|}.
\end{align*}
It follows that
\begin{align*}
A_{\theta.w}&=\frac{2}{\pi}\int_{\bR}|\cF G_t(\xi)|^2 \frac{\sin(|\xi|\theta) \sin(|\xi|w)}{|\xi|^2}|\xi|^{1-2H}d\xi \leq \frac{2t^2}{\pi} \int_{\bR}\frac{\sin(|\xi|\theta) \sin(|\xi|w)}{|\xi|^2}|\xi|^{1-2H}d\xi,
\end{align*}
where for the last line we used the fact that $|\cF G_t(\xi)|\leq \int_{\bR}G_t(x)dx=t$.

By formula 2.6 (3) on p. 78 of \cite{erdelyi54},
\[
c_H \int_{\bR}\frac{\sin(|\xi|\theta) \sin(|\xi|w)}{|\xi|^2}|\xi|^{1-2H}d\xi=\frac{|\theta+w|^{2H}-|\theta-w|^{2H}}{4}.
\]
(This expression coincides with the covariance $\bE[B_w^o B_{\theta}^o]$, where $B_t^{o}=\frac{B_t-B_{-t}}{2}$ is ``odd part'' of the $(B_t)_{t\in \bR}$; see relation (10) of \cite{DZ04}.) Therefore,
\begin{align*}
A_{\theta,w} &\leq \frac{t^2}{2\pi c_H}  \big[ (w+\theta)^{2H}-(w-\theta)^{2H}\big]=\frac{t^2 H}{\pi c_H}  \int_{w-\theta}^{w+\theta}x^{2H-1}dx = \frac{t^2 H}{\pi c_H}   \int_{-\theta}^{\theta}(w+z)^{2H-1}dz.
\end{align*}

For any $z \in (-\theta,\theta)$, $w+z\geq w-\theta\geq 0$ and hence 
\[
(w+z)^{2H-1}=|w+z|^{2H-1}\leq w^{2H-1}+|z|^{2H-1}.
\]
It follows that
\begin{align*}
A_{\theta,w} & \leq  \frac{t^2 H}{\pi c_H}    \left( 2\theta w^{2H-1}+\int_{-\theta}^{\theta}|z|^{2H-1}dz\right)= \frac{t^2 H}{\pi c_H}   \left( 2\theta w^{2H-1}+\frac{1}{H}\theta^{2H}\right).
\end{align*}

By item (i) mentioned at the beginning of the proof,
$\sigma_{\theta} \sim K(t) \theta^{H}$ and $\sigma_{w} \sim K(t) w^{H}$. Hence,
\[
\frac{A_{\theta,w}}{\sigma_{\theta}\sigma_w} \leq C_{t,H} \left[ \left( \frac{\theta}{w}\right)^{1-H}+\left(\frac{\theta}{w}\right)^H\right],
\]
where $C_{t,H}>0$ is a constant that depends on $(t,H)$. Relation \eqref{claim} now follows by \eqref{H-A} and Lemma \ref{lemA} below.

This concludes the proof of Theorem \ref{main}.

\bigskip

\begin{remark}
\label{Gaussian}
{\rm Let $F_R(t)$ be the spatial average given by \eqref{def-F}, where $u(t,x)$ is the solution of (HAM) driven by the Gaussian noise $F$:
\[
F_t(\varphi)=W_t(\varphi* \kappa), \quad \mbox{for} \ t>0,\varphi \in \cS(\bR),
\]
and $W$ is the space-time Gaussian white noise on $\bR_{+}\times \bR$. Note that $F$ is the spatially-homogeneous Gaussian noise introduced in \cite{dalang99}, which is white in time and has covariance:
\[
\bE[F_t(\varphi)F_s(\psi)]=(t \wedge s)\int_{\bR}(\varphi* \kappa)(x)(\psi *\kappa)(x)dx= (t \wedge s)\int_{\bR^2}\varphi(x)\psi(y)\kerf(x-y)dxdy. 
\]

If $\kappa \in L^1(\bR)$ or $\kappa=R_{1,\alpha/2}$, properties {\bf (i)} and {\bf (ii)} still hold, as shown in \cite{DNZ20}.
Regarding property {\bf (iii)}, we notice that the first chaos term in the series expansion of $u(t,x)$ is:
\[
\int_0^t \int_{\bR}G_{t-r}(x-y)F(dr,dy)=\int_0^t \int_{\bR} \big(G_{t-r}(x-\cdot) * \kappa\big)(y)W(dr,dy).
\]

Therefore, $\big(G_{t-r}(x-\cdot) * \kappa\big)(y)$ is the first term (i.e. the mean) which appears in the series expansion of the Malliavin derivative $D_{r,y}^{W}u(t,x)$ of $u(t,x)$, {\em with respect to $W$}. (Note that $D_{r,y}^{W}u(t,x)$ is different than the usual Malliavin derivative $D_{r,y}u(t,x)$ of $u(t,x)$, with respect to $F$.) Using a well-established procedure (see e.g. \cite{DNZ20}), it can be shown that this first term dominates the other terms, in the sense that the following property holds:\\
{\bf (iii)'} for any $0\leq r\leq t$, $x \in \bR$, $y \in \bR$, and $p\geq 2$,
\[
\|D_{r,y}^Wu(t,x)\|_p \leq C_t' \int_{\bR}G_{t-r}(x-y')\kappa(y-y')dy',
\]
where $C_t'>0$ is a constant depending on $t$.

\medskip

To prove the ASCLT, we use the same procedure as above. Let $H_{\theta}$ be the same as above. By the Clark-Ocone formula with respect to $W$,
\[
H_{\theta}=\int_0^t \int_{\bR}\bE[D_{r,y}^W H_{\theta}|\cF_r]W(dr,dy),
\]
where $(\cF_t)_{t\geq 0}$ is the filtration induced by $W$. From this, we infer that:
\begin{equation}
\label{H-tw1}
|\bE[H_{\theta} H_{w}]| \leq \frac{{\rm Lip}^2(\lipf)}{\sigma_{\theta} \sigma_{w}} \int_0^t \int_{\bR} \|D_{r,y}^W F_{\theta}\|_2 \|D_{r,y}^W F_{w}\|_2 dydr.
\end{equation}

By Minkowski's inequality, property {\bf (iii)'}, and Fubini's theorem,
\begin{align*}
\|D_{r,y}^W F_{\theta}\|_2 & \leq \int_{-\theta}^{\theta}\|D_{r,y}^W u(t,x)\|_2 dx \leq C_t 
\int_{-\theta}^{\theta} \int_{\bR} G_{t-r}(x-y')\k(y-y')dy' dx\\
&= C_t  \int_{\bR}\varphi_{t,\theta}(r,y')\k(y-y')dy'.
\end{align*}

Inserting this estimate into \eqref{H-tw1}, we obtain:
\begin{align*}
& |\bE[H_{\theta}H_w]| \leq \\
& \quad \frac{{\rm Lip}^2(\lipf)}{\sigma_{\theta} \sigma_{w}} C_t^2  \int_0^t \int_{\bR} \left( \int_{\bR}\varphi_{t,\theta}(r,y')\k(y-y')dy' \right) 
\left( \int_{\bR}\varphi_{t,\theta}(r,y'')\k(y-y'')dy'' \right) dydr.
\end{align*}

Then, the same argument as above shows that if $\kappa \in L^1(\bR)$ or $\kappa=R_{1,\alpha/2}$, then the process $\{\widetilde{F}_{\theta}\}_{\theta>0}$ defined by \eqref{def-Ft} satisfies the ASCLT.
}
\end{remark}

\appendix

\section{An auxiliary result}

The following result is a version of Lemma 3.1 of \cite{BXZ}.

\begin{lemma}
\label{lemA}
Let $\{ H_\theta\}_{\theta > 0}$ be a family uniformly bounded random variables such that 
\[
\big| \bE[ H_{\theta} H_{w} ] \big| \leq C \left[ \left( \frac{\theta}{w} \right)^{\beta_1}+ \left( \frac{\theta}{w} \right)^{\beta_2}\right] \quad \mbox{for any $\theta<w$}, 
\]
for some $C,\beta_1,\beta_2>0$. Assume that $\theta\mapsto H_\theta$ is a measurable
function almost surely. Then,
\begin{align*} 
L_T:=\frac{1}{\log T} \int_1^T \frac{1}{\theta} H_\theta d\theta \stackrel{a.s.}{\longrightarrow} 0 \quad \mbox{as $T \to \infty$}.
\end{align*}
\end{lemma}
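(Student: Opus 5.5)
The plan is the classical variance-plus-subsequence argument (Ibragimov--Lifshitz / Lacey--Philipp type), as in Lemma 3.1 of \cite{BXZ}. The only change is that the covariance bound now has two terms. Set $\beta_0:=\min(\beta_1,\beta_2)>0$. For $\theta<w$ and $w\geq\theta$ we have $\theta/w\leq 1$, hence $(\theta/w)^{\beta_i}\leq (\theta/w)^{\beta_0}$ for $i=1,2$. The hypothesis therefore reduces to
\[
|\bE[H_\theta H_w]|\leq 2C\left(\frac{\theta}{w}\right)^{\beta_0} \quad \mbox{for } \theta<w,
\]
which is exactly the hypothesis of Lemma 3.1 of \cite{BXZ} with constant $2C$ and exponent $\beta_0$. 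One could simply invoke that lemma. For completeness I would sketch its proof, as follows.

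\textbf{Step 1: second-moment bound.} Let $M:=\sup_\theta\|H_\theta\|_\infty$. By Fubini and symmetry,
\[
\bE[L_T^2]\leq \frac{2}{(\log T)^2}\int_1^T\int_\theta^T \frac{1}{\theta w}\,2C\left(\frac{\theta}{w}\right)^{\beta_0}dw\,d\theta .
\]
Substituting $w=\theta v$, the inner integral is at most $2C\int_1^\infty v^{-1-\beta_0}dv=2C/\beta_0$. This gives $\bE[L_T^2]\leq C'/\log T$.

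\textbf{Step 2: almost sure convergence along a sparse sequence.} Take $T_k=\exp(k^2)$, so that $\sum_k \bE[L_{T_k}^2]\leq C'\sum_k k^{-2}<\infty$. By Chebyshev and Borel--Cantelli (or monotone convergence applied to $\sum_k L_{T_k}^2$), we get $L_{T_k}\to 0$ almost surely.

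\textbf{Step 3: interpolation.} This is the step needing care, since the subsequence is sparse in $T$ but dense enough in $\log T$. For $T_k\leq T\leq T_{k+1}$, using $|H_\theta|\leq M$,
\[
|L_T|\leq \frac{\log T_k}{\log T}|L_{T_k}|+\frac{1}{\log T}\int_{T_k}^{T}\frac{M}{\theta}d\theta \leq |L_{T_k}|+M\,\frac{(k+1)^2-k^2}{k^2}.
\]
The last term is $O(1/k)\to0$, so $L_T\to0$ almost surely. Measurability of $\theta\mapsto H_\theta$ is used only to make $L_T$ well defined and to justify Fubini in Step 1.
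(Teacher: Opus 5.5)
Your proposal is correct and follows essentially the paper's route: the bound $\bE[L_T^2]=O(1/\log T)$, then the subsequence/Borel--Cantelli/interpolation argument of Lemma 3.1 of \cite{BXZ}. Your preliminary reduction to the single exponent $\beta_0=\min(\beta_1,\beta_2)$ is a harmless simplification that makes the lemma a direct corollary of that result, whereas the paper redoes the second-moment computation separately for each exponent. One small slip in Step 1: because the factor $\frac{1}{\theta w}$ is included, the inner integral is at most $\frac{2C}{\theta\beta_0}$ rather than $2C/\beta_0$, and it is this $1/\theta$ that produces the $\log T$ in the outer integral.
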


\begin{proof}
The second moment of $L_T$ is estimated as follows: 
\begin{align*}
\bE\big[ L_T^2 \big]
&= \frac{1}{(\log T)^2} \int_1^T \int_1^T \frac{1}{\theta w}  \bE[ H_\theta H_w ] \,d\theta dw =\frac{2}{(\log T)^2} \int_{1<\theta<w<T} \frac{1}{\theta w}  \bE[ H_\theta H_w ] \,d\theta \\
&\leq \frac{2C}{(\log T)^2} \int_{1<\theta<w<T}  \frac{1}{\theta w}
  \left[\left( \frac{\theta}{w} \right)^{\beta_1}+\left( \frac{\theta}{w} \right)^{\beta_2}\right] \,d\theta dw=\frac{2C}{\log T}\left( \frac{1}{\beta_1}+\frac{1}{\beta_2}\right)
 \end{align*}
using the fact that for any $\beta>0$,
\begin{align*}
\int_{1<\theta<w<T} \frac{1}{\theta w} \left( \frac{\theta}{w} \right)^{\beta} d\theta dw=\int_1^T \left(\int_1^w \theta^{\beta-1}d\theta \right)w^{-\beta-1}dw=\frac{1}{\beta}\int_1^T w^{\beta}w^{\beta-1}dw=\frac{\log T}{\beta}.
\end{align*}
The rest of the proof is the same as in Lemma 3.1 of \cite{BXZ}.

\end{proof}


\begin{thebibliography}{99}

\bibitem{B15} Balan, R.M. (2015). Integration with respect to L\'evy colored noise, with applications to SPDEs. {\em Stochastics} {\bf 87}, 363-381.
    

\bibitem{BJ25} Balan, R.M. and Jim\'enez, J.J. (2025). Moment estimates for solutions of SPDEs with L\'evy colored noise. {\em Stoch. PDEs}. To appear. 

\bibitem{BS26} Balan, R.M. and Stephenson, W.D. (2026). Gaussian fluctuations for hyperbolic Anderson model with L\'evy colored noise. Preprint available on arXiv:2602.23137.
    
\bibitem{BXZ} Balan, R.M., Xia, P. and Zheng, G. (2025). Almost sure central limit theorem for the hyperbolic Anderson model with L\'evy white noise. {\em Proc. Amer. Math. Soc.} {\bf 153}, 3083-3098. 
 
\bibitem{brosamler88} Brosamler, G.A. (1988). An almost everywhere central limit theorem.
{\em Math. Proc. Cambridge Philos. Soc.} {\bf 104}, 561-574.

   
\bibitem{dalang99} Dalang, R. C. (1999). Extending martingale
measure stochastic integral with applications to spatially
homogenous s.p.d.e.'s. {\em Electr. J. Probab.} {\bf 4}, no. 6,
1-29. 

\bibitem{DNZ20} Delgato-Vences, F., Nualart, D. and Zheng, G. (2020). A Central Limit Theorem for the stochastic wave equation with fractional noise. {\em Ann. Inst. Henri Poincar\'e: Prob. Stat.} {\bf 56}, 3020-3042.

\bibitem{DZ04} Dzhaparidze, K. and van Zanten, H. (2004). A series expansion of fractional Brownian motion. {\em Bernoulli} {\bf 130}, 39-55.

\bibitem{erdelyi54} Erd\'elyi, A., Magnus, W., Oberhettinger, F., Tricomi, F.G. (1954). {\em Tables of integral transforms. Vol. I.} McGraw-Hill, New York.
    
\bibitem{lacey-philipp90} Lacey, M.T. and Philipp, W. (1990).
A note on the almost sure central limit theorem. {\em Statist. Probab. Lett.} {\bf 9}, 201-205.

    
\bibitem{levy54} L\'evy, P. (1954). {\em Th\'eorie de l'addition des variables al\'eatoires}. Second edition. Gauthier-Villars, Paris.

\bibitem{schatte88} Schatte, P. (1988). On strong versions of the central limit theorem.
{\em Math. Nachr.} {\bf 137}, 249-256.


    
\bibitem{trauthwein25} Trauthwein, T. (2025). Quantitative CLTs on the Poisson space via Skorohod estimates and $p$-Poincar\'e inequalities. {\em Ann. Appl. Probab.} {\bf 35}, 1716-1754.
    
\bibitem{xia-zheng25} Xia, P. and Zheng, G. (2025). Almost sure central limit theorems for parabolic/hyperbolic Anderson models with Gaussian colored noises. {\em J. Theor. Probab.} {\bf 38}, article no. 46.

\end{thebibliography}
\end{document}